\begin{document}

\title{On monochromatic solutions to $x-y=z^2$}

\author{\tsname}
\address{\tsaddress}
\email{\tsemail}

\begin{abstract}
For $k \in \N$, write $S(k)$ for the largest natural number such that there is a $k$-colouring of $\{1,\dots,S(k)\}$ with no monochromatic solution to $x-y=z^2$.  That $S(k)$ exists is a result of Bergelson, and a simple example shows that $S(k)\geq 2^{2^{k-1}}$.  The purpose of this note is to show that $S(k)\leq 2^{2^{2^{O(k)}}}$.
\end{abstract}

\maketitle

\begin{center}
\emph{To Endre Szemer{\'e}di on his 80th birthday}
\end{center}

\section{Introduction}

In \cite{khasze::}, Khalfalah and Szemer{\'e}di answered a question of Roth, Erd{\H o}s, S{\'a}rk{\" o}zy, and S{\'o}s by showing that for $r \in \N$ and $N$ sufficiently large in terms of $r$, any $r$-colouring of $[N]:=\{1,\dots,N\}$ contains two distinct elements $x$ and $y$ with the same colour and $x+y=z^2$ for some natural $z$.

On the other hand it was shown by Csikv{\'a}ri, Gyarmati, and S{\'a}rk{\" o}zy in \cite[Theorem 3]{csigyasar::0} that one cannot extend the Khalfalah-Szemer{\'e}di result to ask that $z$ be the same colour as $x$ and $y$.  This was refined by Green and Lindqvist \cite{grelin::} who showed that there are $3$-colourings of $\N$ without solutions to $x+y=z^2$ with $x$ and $y$ distinct and all of $x$, $y$, and $z$ having the same colour; and furthermore they showed that $3$ cannot be reduced to $2$.

If $x+y$ is replaced by $x-y$ things are quite different.  Here the celebrated Furstenberg-S{\'a}rk{\" o}zy Theorem (so named in view of \cite{sar::0}, and the comment after the main theorem there) gives a density analogue of the Khalfalah-Szemer{\'e}di result meaning that for any $\delta \in (0,1]$ if $N$ is sufficiently large in terms of $\delta$ then any subset of $[N]$ of size at least $\delta N$ contains two distinct elements $x$ and $y$ such that $x-y=z^2$ for some $z \in \N$.

In the Furstenberg-S{\'a}rk{\" o}zy Theorem we cannot ask for $z$ in the same set as $x$ and $y$ -- consider $\{x \in [N]: x \equiv 1 \pmod 3\}$ -- however Bergelson \cite[p53]{ber::} using a method from \cite{ber::2} showed that in any $k$-colouring of $\N$ there are solutions to $x-y=z^2$ with $x$, $y$ and $z$ all the same colour.  Our purpose is to establish the following quantitative result.
\begin{theorem}\label{thm.mn}
Suppose that $k,N \in \N$ are such that there is a $k$-colouring of $\{1,\dots,N\}$ with no monochromatic solutions to $x-y=z^2$.  Then $N\leq 2^{2^{2^{O(k)}}}$.
\end{theorem}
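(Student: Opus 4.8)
The plan is to argue by contradiction: fix a $k$-colouring $\chi$ of $[N]$ with no monochromatic solution and deduce a bound on $N$. The first step is to record the reformulation of the hypothesis: writing $C_c := \chi^{-1}(c)$ for the colour classes, the assumption says that for every colour $c$ and every $z\in C_c$ we have $C_c\cap(C_c+z^2)=\emptyset$, i.e.\ $C_c$ contains no two elements differing by $z^2$ coming from one of its own elements $z$. I want to stress at the outset that this is strictly weaker than a Furstenberg--S\'ark\"ozy hypothesis: Furstenberg--S\'ark\"ozy forbids a dense set from realising \emph{any} nonzero square difference, whereas a colour class here is only denied the square differences whose \emph{root lies in the same class}. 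Coping with this extra ``the root has the right colour'' demand is what makes the statement genuinely harder than a density result, and is where I expect the real difficulty to lie.

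The engine I would build is an iterated application of Furstenberg--S\'ark\"ozy in which one chases the colour of the square root down through the scales. Passing to the densest colour class on $[N]$ (density at least $1/k$), Furstenberg--S\'ark\"ozy produces $x,y$ in that class with $x-y=w^2$ and $w\le\sqrt N$. If $\chi(w)$ is the colour of the class we are done; otherwise the root $w$ has the ``wrong'' colour and I would refocus the search at the scale governed by $w$, which is at most $\sqrt N$. Since Furstenberg--S\'ark\"ozy applied to a dense class inside $[M]$ always returns a root of size at most $\sqrt M$, each refocusing step replaces the current scale $M$ by roughly $\sqrt M$. Starting from $N$, such a square-root descent can therefore be run only about $\log_2\log_2 N$ times before the scale drops to a bounded size. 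This iterated-square-root behaviour is the source of the doubly-iterated logarithm, and hence of two of the three exponentials in the claimed bound; reassuringly, it also matches the shape of the elementary lower-bound colouring, whose classes are the intervals $[A,A^2)$ and whose number is $\approx\log_2\log_2 N$.

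The second ingredient bounds how long the descent can run in terms of $k$. Along the descent one produces a sequence of scales each carrying a bounded amount of colour data (for instance the colour of the class together with the colour of the root it hands back, or a small tuple of such colours). As there are only $k$ colours, this data takes at most $2^{O(k)}$ possible values, so once the descent exceeds length $2^{O(k)}$ some state must recur across two different scales. I would then argue that such a recurrence can be spliced together to manufacture a square difference $z^2$ whose root $z$ does carry the matching colour, producing the desired monochromatic solution. Contrapositively, in a colouring with no monochromatic solution the descent has length at most $2^{O(k)}$, and combining this with the previous paragraph gives $\log_2\log_2 N\le 2^{O(k)}$, that is $N\le 2^{2^{2^{O(k)}}}$, as required.

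The main obstacle is exactly this splicing step: converting a repetition of colour data into a genuine solution with the root in the correct class. Making it work is what forces one to track tuples of colours rather than single colours, and it is why the threshold emerges as $2^{O(k)}$ rather than linear in $k$ --- which in turn is why the upper bound is a tower of height three while the construction only gives a tower of height two. A secondary point is that the quantitative Furstenberg--S\'ark\"ozy input must be applied uniformly at every scale visited, so I would want an effective version valid at density $1/k$ down to the smallest scale reached; since $k$ is fixed, any of the standard effective forms suffices, and I would not expect its (rather weak) quantitative dependence to disturb the triply-exponential shape of the final bound.
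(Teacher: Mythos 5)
Your reformulation of the difficulty (the root must carry the matching colour) and your prediction of the tower shape are on target, but the argument has a genuine gap at exactly the point you flag: the splicing step. You pigeonhole a repeated colour state across two scales of the descent and assert that the repetition ``can be spliced together'' into a monochromatic solution, but no mechanism is offered and none is available: the relation $x-y=z^2$ does not compose. Knowing $x-y=w^2$ with $w$ wrongly coloured at one scale, and $x'-y'=(w')^2$ with the same colour data at a smaller scale, gives no way to manufacture a single triple with a correctly coloured root --- sums and differences of squares are not squares, and the two solutions live in unrelated parts of $[N]$. A second, related defect is that you invoke only the existential Furstenberg--S\'ark\"ozy theorem, which returns a \emph{single} root $w$ per scale; one wrongly coloured root carries no usable information, and ``refocusing at the scale governed by $w$'' has no content, since the colouring near $\sqrt{M}$ has nothing to do with the colour of the particular $w$ you found.

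The paper supplies both missing ingredients. First, it proves a \emph{counting} version of S\'ark\"ozy's theorem (Corollary \ref{cor.ct}): for dense $A\subset[N]$, a positive proportion of the $x$ in a dilated interval $r\cdot[L]$ with $L\geq N^{1/4}$ satisfy $x^2\in A-A$, so the roots themselves form a dense subset of a progression, and one can pigeonhole a colour among \emph{them}. Second, in place of splicing it runs a colour-focusing iteration: it maintains a set $S_i$ which is a translate of an intersection $\bigcap_{C\in J_i}{(x_{C,i}+C)}$ of translated colour classes collected so far, dense in some $d_i^2\cdot[N_i]$. Any root $x$ with $x^2\in S_i-S_i$ has $x^2\in\bigcap_{C\in J_i}{(C-C)}$, so if $x$ were coloured by any class already in $J_i$ one would have a monochromatic solution at once; hence the dense set of roots is covered by the remaining colours, some new class $C_i$ is dense on it, and intersecting a translate of $C_i$ into $S_i$ yields $\alpha_{i+1}\geq\alpha_i^2/4k$. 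Since each step consumes a fresh colour, the iteration halts within $k$ steps --- not your $2^{O(k)}$ --- and the triple exponential arises not from a long descent but from the density squaring down to $2^{-2^{O(k)}}$ combined with the $\exp(\alpha^{-O(1)})$-type losses in the circle-method input. To repair your sketch you would need to replace the splicing claim by something like this intersection device, which is the real idea of the proof and is absent from your proposal.
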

This is not the first quantitative result in this direction, in fact Lindqvist gave a bound in \cite[Theorem 5.1.2]{lin::3}. 

This bound cannot be replaced by anything smaller than $2^{2^{k-1}}$: if $N=2^{2^{k-1}}$, consider the colouring with colour classes
\begin{equation*}
\mathcal{C}:=\{\{1\}\}\cup\{\{2^{2^i},\dots,2^{2^{i+1}}\}: 0 \leq i \leq k-2\}.
\end{equation*}
This is a $k$-colouring and if $x,y,z \in \{2^{2^i},\dots,2^{2^{i+1}}\}$ then $x-y<2^{2^{i+1}} \leq (2^{2^i})^2 \leq z^2$ and so $x-y \neq z^2$, and if $x,y,z \in \{1\}$ then $x-y=0<1=z^2$.  It follows that this colouring contains no monochromatic solutions to $x-y=z^2$.

Prendiville in \cite[Theorem 1.2]{pre::1} as a special case of a much more general result has established a counting version of Theorem \ref{thm.mn} showing that there is a colour class with $\Omega_k(N^{3/2^k})$ solutions to $x-y=z^2$ (at least once $k$ is sufficiently large); the above construction show that this is close to optimal.

Finally, we remark that it is a well-known open problem to ask for monochromatic solutions to $x^2-y^2=z^2$ in place of $x-y=z^2$ (see \emph{e.g.} \cite[Problem 3.9]{crolev::} ), and some modular analogues have been investigated by Lindqvist \cite{lin::2}, including the modular version of Theorem \ref{thm.mn}.

\subsection*{Notation} When writing $A \subset B$ we do \emph{not} require the inclusion to be strict, and by $O(1)$ we mean an absolute constant.

\section{The argument}

The argument is an iterative application of the following.
\begin{corollary}\label{cor.ct}
Suppose that $A \subset [N]$ has size at least $\alpha N$.  Then either $N \leq \exp(\alpha^{-O(1)})$ or there are natural numbers $r \leq \exp(\alpha^{-O(1)})$ and $L \geq N^{\frac{1}{4}}$  such that
\begin{equation*}
\#\{x \in r \cdot [L]: x^2 \in A-A\} \geq \frac{1}{2}\alpha L.
\end{equation*}
\end{corollary}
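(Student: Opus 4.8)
The plan is to deduce Corollary~\ref{cor.ct} by iterating a single Fourier-analytic dichotomy. Throughout write $f:=1_A$ and, for the parameters $r,L$ under consideration, set $N(j):=\sum_a f(a)f(a-(rj)^2)=\#\{(a,a')\in A^2:a-a'=(rj)^2\}$, so that the quantity to be bounded below is exactly $\#\{j\in[L]:N(j)>0\}$. First I would record the soft reduction to a first moment bound: since $N(j)\le|A|=\alpha N$ for every $j$, Cauchy--Schwarz gives $\#\{j:N(j)>0\}\ge(\sum_jN(j))^2/\sum_jN(j)^2\ge M_1/(\alpha N)$, where $M_1:=\sum_{j=1}^LN(j)$. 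Hence it suffices to produce $r\le\exp(\alpha^{-O(1)})$ and $L\ge N^{1/4}$ with $M_1\ge\tfrac12\alpha^2NL$. (We may assume $|A|=\alpha N$ exactly, since replacing $\alpha$ by the true density only sharpens every bound in play.)

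To analyse $M_1$ I would run the circle method. Writing $e(\theta):=e^{2\pi i\theta}$ and $g(\theta):=\sum_{j=1}^Le((rj)^2\theta)$ for the Weyl sum over the squares in question, one has $M_1=\int_0^1|\hat f(\theta)|^2\overline{g(\theta)}\,d\theta$. Choosing $rL\le\tfrac1{10}\sqrt N$ the contribution of a neighbourhood of the origin yields the expected main term, of size $\approx\alpha^2NL$. This gives the key single-step dichotomy: either the contribution of the remaining arcs is smaller than, say, $\tfrac14\alpha^2NL$ --- in which case $M_1\ge\tfrac12\alpha^2NL$ and we are done at the current scale --- or some arc around a rational $a/q$ contributes substantially, which by Weyl's inequality for $g$ and the evaluation of the associated Gauss sums forces $|\hat f(a/q)|$ (or a nearby value) to be large for some denominator $q\le\exp(\alpha^{-O(1)})$.

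A large Fourier coefficient of $1_A$ is converted in the standard way into a density increment of $A$ on a subprogression of $[N]$. The structural point, already forced by the example $A\subseteq\{x\equiv c\!\pmod{q^2}\}$ (for which every element of $A-A$ is a multiple of $q^2$, so that every square difference has root divisible by $q$), is that the relevant modulus is a perfect square: one arranges the increment to live on a progression of common difference $m^2$ with $m\le\exp(\alpha^{-O(1)})$. Rescaling that progression by $m$ produces $A'\subseteq[N']$ with $N'\ge N/\exp(\alpha^{-O(1)})$ in which $w^2\in A'-A'$ if and only if the corresponding $(mw)^2$ lies in $A-A$; thus the very same problem reappears with $r$ replaced by $mr$. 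Since the density rises by a factor $1+\Omega(1)$ at each increment it can increase only $\alpha^{-O(1)}$ times, over which the accumulated multiplier stays $\le\exp(\alpha^{-O(1)})$ and the universe shrinks only by a multiplicative $\exp(\alpha^{-O(1)})$. Provided $N>\exp(\alpha^{-O(1)})$ we therefore reach the good case at some scale $N_0\ge N/\exp(\alpha^{-O(1)})$ with accumulated $r\le\exp(\alpha^{-O(1)})$; taking $L\approx\sqrt{N_0}/r$ there, the bound $N>\exp(\alpha^{-O(1)})$ is exactly what is needed to absorb the $\exp(\alpha^{-O(1)})$ losses into the slack $N^{1/2-1/4}$ and conclude $L\ge N^{1/4}$.

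I expect the genuine work to lie in the second and third paragraphs rather than the first. Concretely, the main obstacle is (a) the minor-arc/Weyl estimate for the square-weighted sum $g$ together with the major-arc Gauss-sum computation, carried out with enough uniformity to extract a usable large Fourier coefficient; and (b) checking that the density increment can always be taken on a \emph{square}-modulus progression, so that the problem reproduces itself, and that the per-step growth of $r$ and shrinkage of the universe telescope to no worse than $\exp(\alpha^{-O(1)})$ and $N^{1/4}$ across the $\alpha^{-O(1)}$ iterations.
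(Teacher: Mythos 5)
Your proposal is correct and is essentially the paper's own argument: the paper also runs Sárközy's circle-method dichotomy (expected main term versus a large Fourier coefficient at a small denominator $q$, extracted via Weyl's inequality), performs an $L^2$ density increment on a progression of \emph{square} common difference $q^2$ so that the problem rescales to itself, iterates $\alpha^{-O(1)}$ times while the accumulated modulus telescopes to $\exp(\alpha^{-O(1)})$ and the length losses are absorbed using $N \ge \exp(\alpha^{-O(1)})$, and finishes by converting the solution count into a count of good $z$ --- your Cauchy--Schwarz step is exactly the paper's trivial bound $\sum_{x-y=u}1_{A'}(x)1_{A'}(y) \le \#A'$. The remaining discrepancies are cosmetic and harmless: the paper obtains $q = O(\alpha^{-2})$ per step where you claim only $\exp(\alpha^{-O(1)})$, its per-step length loss is $\alpha^{O(1)}/\log N_i$ rather than a clean $\exp(-\alpha^{-O(1)})$ factor, and its increment is additive ($\alpha \mapsto \alpha + \alpha^{O(1)}$, matching your stated iteration count) rather than your claimed multiplicative factor $1+\Omega(1)$.
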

This corollary is proved in \S\ref{sec.fs}, but reading it out of S{\'a}rk{\" o}zy's work \cite{sar::0} is completely routine.

\begin{proof}[Proof of Theorem \ref{thm.mn}]
Let $\mathcal{C}$ be a cover of $[N]$ of size $k$. We proceed iteratively: at stage $i$ we have natural numbers $N_i$ and $d_i$, a set $J_i \subset \mathcal{C}$, and integers $x_{C,i}$ for each $C \in J_i$, and a final integer $x_i$.  We write
\begin{equation*}
S_i:=\bigcap_{C \in J_i}{(x_{C,i}+C)} \text{ and } \alpha_i:=\frac{\#(x_i+S_i)\cap d_i^2\cdot [N_i]}{N_i}.
\end{equation*}
Let $A:=\{x \in [N_i]:d_i^2x \in x_i+S_i\}$ so that $A \subset [N_i]$ and $\#A \geq \alpha_iN_i$.  By Corollary \ref{cor.ct} applied to $A$ either $N_i \leq \exp(\alpha_i^{-O(1)})$ and we terminate, or there are natural numbers $L_i$ and $r_i$ with
\begin{equation*}
L_i \geq N_i^{\frac{1}{4}} \text{ and } r_i \leq \exp(\alpha_i^{-O(1)}) 
\end{equation*}
such that
\begin{equation*}
\#\{x \in r_i\cdot [L_i] : x^2 \in S_i-S_i\} \geq \frac{1}{2}\alpha_i L_{i}.
\end{equation*}
But then by design $x^2 \in A-A$ and hence
\begin{equation*}
(d_ix)^2 \in d_i^2A - d_i^2A \subset (x_i+S_i)-(x_i+S_i) \subset \bigcap_{C \in J_i}{(C-C)},
\end{equation*}
and so putting $d_{i+1}:=d_ir_i$ we have
\begin{equation}\label{eqn.5}
\#\left\{x \in d_{i+1}\cdot [L_i] : x^2 \in \bigcap_{C \in J_i}{(C-C)}\right\} \geq \frac{1}{2}\alpha_i L_i.
\end{equation}
It follows that assuming we have no monochromatic triple, the set on the left of (\ref{eqn.5}) must be covered by the sets in $\mathcal{C} \setminus J_i$.  By averaging we conclude that there is some $C_i \in \mathcal{C} \setminus J_i$ such that
\begin{equation*}
\#(d_{i+1}\cdot [L_i]) \cap C_i \geq \frac{1}{2k}\alpha_i L_i,
\end{equation*}
and so putting $N_{i+1}:=L_i/r_id_{i+1}$ there is some $x_*$ such that
\begin{equation*}
\#(x_*+d_{i+1}^2\cdot [N_{i+1}]) \cap C_i \geq \frac{1}{2k}\alpha_i N_{i+1}.
\end{equation*}
Finally, since $r_iN_{i+1} \leq N_i$ we have
\begin{equation*}
\#d_i^2\cdot [N_i] - d_{i+1}^2\cdot [N_{i+1}] = \#[N_i]-r_i^2\cdot [N_{i+1}] \leq 2N_i,
\end{equation*}
whence
\begin{align}
\nonumber \frac{1}{2k}\alpha_i^2 N_iN_{i+1} & \leq\#(x_i+S_i)\cap (d_i^2\cdot [N_i])\#(C_i -x_*)\cap (d_{i+1}^2\cdot [N_{i+1}])\\
\nonumber & =  \sum_x{\#(x+((x_i+S_i)\cap (d_i^2\cdot [N_i]))) \cap ((C_i -x_*)\cap (d_{i+1}^2\cdot [N_{i+1}]))}\\
\nonumber & \leq \max_x{\#(x+x_i+S_i) \cap (C_i-x_*) \cap (d_{i+1}^2\cdot [N_{i+1}]))}\\
\nonumber & \qquad\qquad\qquad\qquad\qquad\qquad\qquad\times \#d_i^2\cdot [N_i] - d_{i+1}^2\cdot [N_{i+1}]\\
\label{eqn.ig}& \leq 2N_i\max_x{\#(x+S_i) \cap (C_i-x_*) \cap (d_{i+1}\cdot [N_{i+1}]))}.
\end{align}
Let $x_{i+1}$ be an integer such that maximum in (\ref{eqn.ig}) on the right is achieved.  Put $x_{C_i,i}:=-x_*$, $x_{C,i+1}=x_{i+1}+x_{C,i}$, $J_{i+1}:=J_i \cup \{C_i\}$, and note that
\begin{equation*}
\alpha_{i+1} \geq \frac{1}{4k}\alpha_i^2.
\end{equation*}
Since there are at most $k$ colours, this process most terminate with some $j \leq k$. Thus $\alpha_i \geq 2^{-2^{O(k)}}$ for all $i\leq j$, and hence $r_i \leq 2^{2^{2^{O(k)}}}$ for all $i\leq j$.  But then $d_{i+1} = d_ir_i$ and hence $d_i \leq   2^{2^{2^{O(k)}}}$ for all $i\leq j$.  Finally $N_{i+1} \geq N_i^{1/4}/d_{i+1}^2$ for all $i\leq j$ and so $N_j \geq 2^{-2^{2^{O(k)}}}N^{4^{-k}}$. However at stage $j$ we have $N_j\leq \exp(\alpha_j^{-O(1)}) \leq 2^{2^{2^{O(k)}}}$, and the bound on $N$ follows.
\end{proof}

\section{Counting in the Furstenberg-S{\'a}rk{\" o}zy Theorem}\label{sec.fs}

Our aim now is to prove the following.
\begin{corollary*}[Corollary \ref{cor.ct}]
Suppose that $A \subset [N]$ has size at least $\alpha N$.  Then either $N \leq \exp(\alpha^{-O(1)})$ or there are natural numbers $r \leq \exp(\alpha^{-O(1)})$ and $L \geq N^{\frac{1}{4}}$  such that
\begin{equation*}
\#\{x \in r \cdot [L]: x^2 \in A-A\} \geq \frac{1}{2}\alpha L.
\end{equation*}
\end{corollary*}
This can be read out of S{\'a}rk{\" o}zy's original argument in \cite{sar::0}.  Unfortunately that argument is presented to deal with existence rather than counting so we have to go some distance into the proof to extract what we need.

We shall make use of the Fourier transform: write $\T:=\R/\Z$, and for $\theta \in \T$ put $e(\theta):=\exp(2\pi i \theta )$.  Given $f \in \ell_1(\Z)$ we write
\begin{equation*}
\wh{f}(\theta):=\sum_{n \in \Z}{f(n)\overline{e(\theta n)}} \text{ for all }\theta \in \T,
\end{equation*}
and $\wt{f}(z):=\overline{f(-z)}$.  Finally, for $f,g \in \ell_1(\Z)$ we write
\begin{equation*}
f \ast g(z):=\sum_{x+y=z}{f(x)g(y)} \text{ for all }z \in \Z.
\end{equation*}
For a finite non-empty set of integers $S$ we write $m_S$ for the function assigning mass $\#S^{-1}$ to each element $S$ and $0$ elsewhere.

We shall actually prove the following from which Corollary \ref{cor.ct} follows immediately (with the $L$ in the corollary being the $\sqrt{L'}$ of the proposition) since $\sum_{x-y=u}{1_{A'}(x)1_{A'}(y)} \leq \#A'$.
\begin{proposition}\label{prop.itkey}
Suppose that $A \subset [N]$ has size at least $\alpha N$.  Then there are natural numbers $r \leq \exp(\alpha^{-O(1)})$, $L \geq (\log N)^{\alpha^{-O(1)}}N$ and $L'\geq \alpha^{O(1)}L$  such that $A':=A \cap (x_0+r^2\cdot [L])$ has $\alpha':=\#A'/L$ with $\alpha' \geq \alpha $ and 
\begin{equation*}
\sum_{x-y=z^2}{1_{A'}(x)1_{A'}(y)1_{r\cdot [\sqrt{L'}]}(z)} \geq \frac{1}{2}(\alpha')^2LL'.
\end{equation*}
\end{proposition}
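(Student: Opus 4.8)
The plan is to run S{\'a}rk{\"o}zy's Fourier-analytic density-increment argument while keeping track of the count at every stage. I would maintain a progression $P=x_0+r^2\cdot[L]$ whose common difference $r^2$ is a perfect square and on which $A$ has relative density $\alpha'\geq\alpha$. Insisting that the common difference be a square is the whole point of carrying the parameter $r$: if $x,y\in P$ and $x-y=z^2$ then $z$ must be a multiple of $r$, so after rescaling $P$ to $[L]$ the problem reverts to the original one with $z$ replaced by $w=z/r\in[\sqrt{L'}]$. At each stage I would show that either the number of square differences inside $A\cap P$, counted with $z\in r\cdot[\sqrt{L'}]$, already reaches a fixed proportion of its expected main term (which is exactly the conclusion sought), or else $A$ enjoys a density increment on a subprogression of $P$ whose common difference is again a perfect square. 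As the density cannot exceed $1$, the second alternative occurs at most $\alpha^{-O(1)}$ times, and tracking the growth of $r$ and the shrinkage of $L$ over these steps produces the stated bounds.

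For the analytic core, rescale $P$ to $[L]$, put $f:=1_{A''}$ where $A'':=\{a\in[L]:x_0+r^2a\in A\}$, and let $\phi:=f-\alpha'1_{[L]}$ be the balanced part, so $\wh{\phi}(0)=0$ and $\|\phi\|_2^2\leq\alpha' L$ by Parseval. Writing $s$ for the indicator of the squares $\{w^2:1\leq w\leq\sqrt{L'}\}$, the quantity to be bounded below is
\[
R=\sum_{1\leq w\leq\sqrt{L'}}\sum_{a}{f(a)f(a-w^2)}=\int_{\T}{|\wh{f}(\theta)|^2\overline{\wh{s}(\theta)}\,d\theta}.
\]
Substituting $f=\alpha'1_{[L]}+\phi$ separates $R$ into a main term $(\alpha')^2\sum_{w\leq\sqrt{L'}}(L-w^2)$, two cross terms, and an error $E:=\int_{\T}|\wh{\phi}(\theta)|^2\overline{\wh{s}(\theta)}\,d\theta$. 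I would choose $L'\asymp\alpha' L$: this keeps the main term at least a fixed proportion of $(\alpha')^2L\sqrt{L'}$ and, since $\phi$ has mean zero, makes the cross terms lower order via the elementary fact that $\sum_{w\leq\sqrt{L'}}1_{[L]}(a-w^2)$ equals $\sqrt{L'}$ off a set of $O(L')$ boundary values of $a$ (this choice is the origin of the constraint $L'\geq\alpha^{O(1)}L$). Everything then comes down to controlling $E$.

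To bound $E$ I would dissect $\T$ into major and minor arcs at the threshold $Q=\alpha^{-O(1)}$. On the minor arcs Weyl's inequality for the quadratic exponential sum $\wh{s}(\theta)=\sum_{w\leq\sqrt{L'}}e(-\theta w^2)$ gives $|\wh{s}(\theta)|\ll\sqrt{L'}\,Q^{-1/2+o(1)}$, which against $\int|\wh{\phi}|^2=\|\phi\|_2^2\leq\alpha' L$ contributes $\ll Q^{-1/2+o(1)}\alpha' L\sqrt{L'}$ to $E$; for $Q$ a suitable power of $1/\alpha$ this is below a fixed fraction of the main term. The arc about $0$ contributes positively and only helps. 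Hence if $R$ fails to reach half the main term, the remaining major arcs about fractions $a/q$ with $2\leq q\leq Q$ must carry the deficit, and using the Gauss-sum bound $|\wh{s}(\theta)|\ll\sqrt{L'}/\sqrt{q}$ there to strip off the factor $\sqrt{L'}$ one finds that $\wh{\phi}$ has $\ell^2$-mass $\gg(\alpha')^2L$ concentrated on these structured frequencies; equivalently $\phi$ correlates with characters $x\mapsto e(ax/q)$ of small modulus.

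Finally such a correlation is converted in the usual way into an additive density increment, of size a fixed power of $\alpha$, for $A''$ on a subprogression of common difference $q$ and length $\gg L/Q^{O(1)}$. The one genuinely non-standard point is that to keep the common difference a perfect square I would refine to common difference $q^2$ --- equivalently replace $r$ by $rq$ and restrict the dilation set for $z$ to multiples of $rq$ --- which is precisely what the running parameter $r$ is there to absorb. Iterating, the increment alternative occurs at most $\alpha^{-O(1)}$ times, and since $r$ is multiplied by at most $Q$ at each step we get $r\leq Q^{\alpha^{-O(1)}}=\exp(\alpha^{-O(1)})$, while $L$ loses at most a factor $Q^{O(1)}$ per step, yielding the asserted lower bounds for $L$ and then for $L'\asymp\alpha' L$. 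I expect the only real obstacle to be this interface: forcing every density increment to respect the perfect-square constraint and propagating the counting lower bound through the iteration, since the circle-method input itself --- the Weyl and Gauss-sum estimates --- is entirely routine.
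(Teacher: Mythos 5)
Your proposal is correct and follows essentially the same route as the paper: the paper's Lemma \ref{lem.it} is precisely your per-step dichotomy (balanced function, main term with the auxiliary length $N'=\alpha^{O(1)}N$ to control boundary cross terms, Weyl's inequality to kill large denominators, major-arc mass pigeonholed onto a single $q \leq \alpha^{-O(1)}$, and a Fej\'er-kernel averaging increment on a progression of common difference $q^2$ so the square constraint survives), and Proposition \ref{prop.itkey} is exactly your iteration with the same bookkeeping for $r=\prod q_i$, $L$, and $L'$. The one cosmetic divergence is in the dissection: rather than your single threshold $Q=\alpha^{-O(1)}$ with a $Q^{-1/2+o(1)}$ minor-arc saving (where the $o(1)$ loss is in the length, not in $Q$, and so needs care), the paper uses S\'ark\"ozy's log-refined form of Weyl with arcs of width governed by $Q = c\alpha^2 N'/\log N'$, which is the source of the $\log$-factor loss per step that your accounting slightly understates but that the statement's bounds in any case absorb.
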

The argument is an iteration of the following standard exercise in the circle method in which we can afford to be far sloppier than S{\'a}rk{\" o}zy.
\begin{lemma}\label{lem.it}
Suppose that $A \subset [N]$ has size $\alpha N$, and $N'$ is a further integer.  Then at least one of the following holds:
\begin{enumerate}
\item\label{pt1} $N' \geq \alpha^{O(1)}N$;
\item\label{pt2} 
\begin{equation*}
\sum_{x-y=z^2}{1_{A}(x)1_{A}(y)1_{[\sqrt{N'}]}(z)} \geq \frac{1}{2}\alpha^2N\sqrt{N'};
\end{equation*}
\item\label{pt3} there is some $q \leq \alpha^{-O(1)}$ and $N''\geq \alpha^{O(1)}N'/\log N'$ such that 
\begin{equation*}
\sup_x{\#A\cap (x+q^2\cdot [N''])} \geq (\alpha + \alpha^{O(1)})N''.
\end{equation*}
\end{enumerate}
\end{lemma}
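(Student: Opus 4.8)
The plan is to run the circle method on the quantity appearing in conclusion~(\ref{pt2}). Writing $Z:=\lfloor\sqrt{N'}\rfloor$ and $S(\theta):=\sum_{z=1}^{Z}e(\theta z^2)$ for the Weyl sum over squares up to $N'$, orthogonality gives
\[
\sum_{x-y=z^2}1_A(x)1_A(y)1_{[\sqrt{N'}]}(z)=\int_\T|\wh{1_A}(\theta)|^2S(\theta)\,d\theta.
\]
The heuristic main term is the ``random'' count $\alpha^2\sum_{z=1}^{Z}\#\{(x,y)\in[N]^2:x-y=z^2\}=\alpha^2\sum_{z\le Z}(N-z^2)$, which is $\tfrac34\alpha^2 NZ$ or larger as soon as $N'$ is a small enough multiple of $N$; if instead $N'\ge\alpha^{O(1)}N$ then~(\ref{pt1}) holds and there is nothing to prove, so I would assume throughout that $N'<\alpha^{O(1)}N$ (which also renders the boundary and cross terms below negligible). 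The whole game is then to show that the integral is close to this main term unless there is a density increment.

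To make the comparison I would pass to the balanced function $f:=1_A-\alpha1_{[N]}$, for which $\wh f(0)=0$, and expand $|\wh{1_A}|^2=\alpha^2|\wh{1_{[N]}}|^2+2\alpha\Re(\wh{1_{[N]}}\,\overline{\wh f})+|\wh f|^2$. The first term integrates against $S$ to the main term; the cross term is an honest boundary effect, controlled by $\wh f(0)=0$ together with $N'<\alpha^{O(1)}N$. For the remaining term $\int_\T|\wh f|^2S$ I would split the circle into the usual major arcs $\mathfrak M$ about rationals $a/q$ with $q\le Q:=\alpha^{-O(1)}$ and radius $\sim 1/N'$, and the complementary minor arcs $\mathfrak m$. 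On $\mathfrak m$ Weyl's inequality for the square exponential sum gives $\sup_{\mathfrak m}|S|\le\alpha^{O(1)}Z$ once $Q$ is a large enough power of $\alpha^{-1}$, so by Parseval $\int_{\mathfrak m}|\wh f|^2|S|\le(\sup_{\mathfrak m}|S|)\,\|f\|_2^2\le\alpha^{O(1)}Z\cdot\alpha N$, a negligible fraction of the main term. Collecting these bounds, either the count already exceeds $\tfrac12\alpha^2 N\sqrt{N'}$, giving~(\ref{pt2}), or else $\int_{\mathfrak M}|\wh f|^2S$ is large in modulus, and since $|S|\le Z$ on $\mathfrak M$ this forces $\int_{\mathfrak M}|\wh f|^2\ge\alpha^{O(1)}N$.

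It remains to convert this concentration of $L^2$-mass on the major arcs into the density increment~(\ref{pt3}). Pigeonholing over the $O(Q^2)$ arcs, a single arc about some $a/q$ carries energy $\int_{|\beta|\le\rho}|\wh f(a/q+\beta)|^2\,d\beta\ge\alpha^{O(1)}N$. Modulating by $x\mapsto e(-(a/q)x)$ and applying Parseval to the low-frequency part, this says that the twisted function has a large mean over some window of length $\sim 1/\rho\sim N'$; splitting that window into residue classes modulo $q$ (on each of which the character is constant) and choosing the class of largest mean yields a progression of common difference $q$ and length $\sim N'/q$ on which $A$ has density at least $\alpha+\alpha^{O(1)}$. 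Finally I would refine this progression into its $q$ sub-progressions of common difference $q^2$ and pigeonhole to land on one of difference $q^2$, of length $\sim N'/q^2$ and the same density increment; the slightly lossy, non-sharp cutoff used to localise the window is where the $\log N'$ in $N''\ge\alpha^{O(1)}N'/\log N'$ enters.

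The routine steps are the orthogonality identity, the Weyl bound on the minor arcs, and the Parseval bookkeeping. The main obstacle is the last paragraph: extracting a density increment that is a \emph{fixed} power of $\alpha$ (rather than one that degrades with $N'/N$) requires exploiting the $L^2$-energy on the arcs through an averaging and pigeonhole argument, not a crude bound on a single Fourier coefficient, and one must track the scales carefully so that the increment sits on a progression of the correct modulus $q^2$ and of length proportional to $N'$ up to the logarithmic loss. As the paper remarks, all of this is already present in S{\'a}rk{\" o}zy's treatment, carried out there with far more precision than is needed here.
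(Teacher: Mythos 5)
Your overall route is the paper's: pass to the balanced function $f=1_A-\alpha 1_{[N]}$, use conclusion (\ref{pt1}) as licence to assume $N'$ is at most a small power-of-$\alpha$ multiple of $N$, kill the contribution to $\int|\wh{f}|^2|S|$ away from small-denominator rationals by Weyl plus Parseval, and convert the surviving $L^2$-mass near a single denominator $q$ into the increment (\ref{pt3}). But your minor-arc step has a genuine gap as written. With major arcs of radius of order $1/N'$ about $a/q$, $q\leq \alpha^{-O(1)}$, the claim $\sup_{\mathfrak m}|S|\leq \alpha^{O(1)}Z$ is false: at $\theta=K/N'$ with $K$ a bounded constant, just outside the arc at $0$, one has $|S(\theta)|\approx Z\left|\int_0^1 e(Ku^2)\,du\right|=\Omega(Z/\sqrt{K})$, which is $\Omega(Z)$. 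Widening the radius to $\alpha^{-O(1)}/N'$ handles bounded $K$, but Weyl's inequality still does not cover the remaining annulus: for $\theta=a/q+\beta$ with $q$ small, the only small-height rational approximation to $\theta$ is $a/q$ itself, for which Weyl gives just $O(Z/\sqrt{q})$ with no decay in $\beta$, while the alternative approximation supplied by Dirichlet has denominator $q'$ of order $1/(q|\beta|)$, and then the error term $(q'\log q')^{1/2}$ in the unsmoothed Weyl bound can be as large as $\alpha^{O(1)}Z\sqrt{\log N'}$ near the inner edge of the annulus. One needs either a bound sensitive to $|\beta|$ (stationary phase on the arcs) or, as the paper does following S{\'a}rk{\"o}zy, his Lemma 4 combined with Dirichlet at the \emph{capped} parameter $Q=c\alpha^2N'/\log N'$, so that the arcs $\mathfrak{M}_q$, $q\leq Q$, cover all of $\T$ and $|\wh{1_S}(\theta)|=O(1/\sqrt{q}+c\alpha)\sqrt{N'}$ holds on each; there are then no minor arcs at all, and arcs with $(c\alpha)^{-2}\leq q\leq Q$ are discarded wholesale by Parseval. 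Note that this capping of the denominator range --- that is, Weyl --- is the true source of the $\log N'$ in (\ref{pt3}), not the ``non-sharp cutoff used to localise the window'' that you cite: the paper's increment lives on a progression $P$ of length $2Q+1$, which is where $N''\geq\alpha^{O(1)}N'/\log N'$ comes from.

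The endgame in the paper is also more economical than your sketch: there are no windows and no two-stage $q\to q^2$ pigeonhole. Having found a single $q=O(\alpha^{-2})$ with $\int_{\mathfrak{M}_q}|\wh{f}|^2=\Omega(\alpha^3N)$ (summed over all numerators $a$, avoiding your arc-by-arc pigeonhole), it convolves with $m_P$ for $P$ a progression of common difference $q^2$ and length $2Q+1$, uses $|\wh{m_P}|=\Omega(q^{-1})$ on $\mathfrak{M}_q$ and Parseval to get $\|1_A\ast m_P\|_{\ell_2}^2\geq \alpha^2N+\alpha^{O(1)}N-O(\cdot)$, and then a single averaging delivers the $\ell_\infty$ increment directly on a translate of a difference-$q^2$ progression. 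Relatedly, your phrase ``the twisted function has a large mean over some window'' conflates $L^2$ with $L^1$: arc energy gives a large mean \emph{square} of the locally averaged balanced function, and one must exploit that its mean is essentially zero to force a large positive part --- your final paragraph shows you know this, but the middle paragraph as written skips that step. In summary: same strategy as the paper, but the minor-arc bound needs the annulus treatment (and with it the correct accounting of the $\log N'$ loss) before the proof is complete.
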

\begin{proof}
Write $f:=1_A - \alpha 1_{[N]}$ and $S:=\{1 \leq z^2 \leq N'\}$, and note that
\begin{equation*}
\langle f \ast \wt{f},1_S \rangle_{\ell_2} = \sum_{x-y=w}{1_A(x)1_{A}(y)1_S(w)} -\alpha^2N\sqrt{N'} +O(N')^{3/2},
\end{equation*}
whence either $N' \geq \alpha^{O(1)}N$ or
\begin{equation*}
\int{|\wh{f}(\theta)|^2|\wh{1_S}(\theta)|d\theta} \geq \frac{1}{4}\alpha^2N\sqrt{N'}.
\end{equation*}
Let $Q:=c\alpha^2 N'/\log N'$ for an absolute $c>0$ to be chosen shortly.  By the box principle, for every $\theta \in \T$ there are integers $a$ and $q$ with $(a,q)=1$, $1\leq q \leq Q$ and such that $|\theta -a/q| \leq 1/qQ$.  \cite[Lemma 4]{sar::0} (the notation for our $\wh{1_S}$ is S{\'a}rk{\" o}zy's $T$, defined on \cite[p126]{sar::0}) is Weyl's inequality in a form we can make easy use of:
\begin{equation}\label{eqn.bd}
|\wh{1_S}(\theta)| = O(\sqrt{N'/q} + (\sqrt{N'}\log q)^{\frac{1}{2}} + (q\log q)^{1/2}) = O(1/\sqrt{q} + c\alpha) \sqrt{N'}.
\end{equation}
We define the major arcs to be
\begin{equation*}
\mathfrak{M}_q:=\left\{\theta \in \T: \exists a\text{ with }(a,q)=1 \text{ and }  \left|\theta -\frac{a}{q}\right| \leq \frac{1}{qQ}\right\},
\end{equation*}
so that by Parseval's inequality and (\ref{eqn.bd}) with $Q_0:=(c\alpha)^{-2}$ we have
\begin{equation*}
\int_{\bigcup_{q=Q_0}^Q{\mathfrak{M}_q}}{|\wh{f}(\theta)|^2|\wh{1_S}(\theta)|d\theta} = O(c\alpha^2N\sqrt{N'}).
\end{equation*}
Thus there is an absolute $c>0$ such that 
\begin{equation*}
\sum_{q =1}^{Q_0}{O\left(\frac{1}{\sqrt{q}}\right)\int_{\mathfrak{M}_q}{|\wh{f}(\theta)|^2d\theta}}\geq \frac{1}{8}\alpha^2N.
\end{equation*}
We conclude that there is some $q =O(\alpha^{-2})$ such that
\begin{equation*}
\int_{\mathfrak{M}_q}{|\wh{f}(\theta)|^2d\theta}=\Omega(\alpha^3N) .
\end{equation*}
Let $P$ be a progression of length $2Q+1$ and common difference $q^2$.  Then for all $\theta \in \mathfrak{M}_q$ we have
\begin{equation*}
|\wh{m_P}(\theta)| =\left|\frac{\sin(Q\pi q^2\theta)}{\#P\sin (\pi q^2 \theta)}\right| =\Omega(q^{-1}).
\end{equation*}
By Parseval's theorem we conclude that
\begin{equation*}
\|(1_A - \alpha 1_{[N]})\ast m_P\|_{\ell_2}^2 =\int{|\wh{f}(\theta)|^2|\wh{m_P}(\theta)|^2d\theta} \geq \alpha^{O(1)}N
\end{equation*}
and so
\begin{equation*}
\|1_A \ast m_{P'}\|_{\ell_2}^2  \geq \alpha^2N+ \alpha^{O(1)}N- O(\#P'/N),
\end{equation*}
and this gives the result with $N''=\#P'$ by averaging.
\end{proof}

\begin{proof}[Proof of Proposition \ref{prop.itkey}] We construct natural numbers $d_i$, $N_i$, and $N_i'$ iteratively with $N_i' = \alpha^{O(1)}N_i$ such that conclusion (\ref{pt1}) of Lemma \ref{lem.it} does not hold if $N_i'$ is the `further integer' of that lemma.  At stage $i$ let $x_i$ be such that $\#A\cap(x_i+d_i^2\cdot [N_i])$ is maximal over all possible choices; write $\alpha_i$ for the ratio of this size to $N_i$.  

Apply Lemma \ref{lem.it} to the set $S_i:=\{x\in [N_i]: x_i+d_i^2x \in A\}$ with the `further integer' being $N_i'$.  In case (\ref{pt2}) stop; in case (\ref{pt3}) there is some $q_i \leq \alpha_i^{-O(1)}$ and $N_{i+1} \geq \alpha^{O(1)}N_i'/\log N_i'$ such that
\begin{equation*}
 \alpha_i + \alpha_i^{O(1)} \leq \frac{\#S_i \cap (x+q_i^2\cdot [N_{i+1}])}{N_{i+1}}=\frac{\#A \cap (x_i+d_i^2x+(d_i^2q_i^2)\cdot [N_{i+1}])}{N_{i+1}}.
\end{equation*}
Set $d_{i+1}=d_iq_i$ so that $\alpha_{i+1} \geq \alpha_i + \alpha_i^{O(1)}$.  This process terminates for some $j=\alpha^{-O(1)}$ because density cannot exceed $1$.  We set $L:=N_j$, $L':=N_j'$, $r:=d_j$ and have the result since $N_j' \geq \alpha^{O(1)}N_j$, and $N_{i+1} \geq \alpha^{O(1)}N_i/\log N_i$ and $d_{i+1} \leq d_i\alpha^{-O(1)}$ for all $i<j$.
\end{proof}

\section*{Acknowledgement} The author should like to thank the referee for a careful reading of the paper; the editors of the volume for the invitation to submit; and most importantly Endre Szemer{\'e}di for many years of support and interesting discussions.

\bibliographystyle{halpha}

\bibliography{references}

\end{document}